\documentclass[11pt,oneside]{amsart}

\usepackage{url}
\usepackage[english]{babel}
\usepackage{amsthm, amsmath, amssymb, amsfonts, amscd}
\usepackage[svgnames]{xcolor} %nonstandard colors
\usepackage[colorlinks,citecolor=DarkViolet,urlcolor=DarkViolet, bookmarksdepth=3, linkcolor=DarkCyan]{hyperref} %hyperrefs colors
\usepackage{kpfonts} %fonts

\usepackage{stmaryrd}
\usepackage{listings}
\usepackage{graphicx}
\usepackage{tikz-cd}
\usepackage{indentfirst}
\usepackage{mathrsfs}
\usepackage{accents}
\usepackage{array}

\usepackage{xcolor}

\def\ext{\operatorname{ext}}

\def\k{k}
\def\tk{\tilde \k}
\def\m{\mathfrak m}

\def\vol{\operatorname{vol}}
\def\coker{\operatorname{coker}}

\newtheorem{theorem}{Theorem}[section]

\newtheorem{lemma}[theorem]{Lemma}

\title{Parseval--Rayleigh identities for graded Artinian Gorenstein algebras}

\author{Mykola Pochekai}

\subjclass[2020]{Primary 13H10; Secondary 13D02, 13A35, 05E40}

\address{Mykola Pochekai \\ Center for Geometry and Physics \\ Institute for Basic Science \\ Pohang \\ South Korea}

\email{mykola.pochekai@gmail.com}

\begin{document}
\begin{abstract}
We formulate and prove Parseval--Rayleigh identities for graded Artinian Gorenstein algebras over fields of positive characteristic. Specializing the general result, we provide an alternative proof of the Parseval--Rayleigh identities of generic Artinian reductions of Stanley--Reisner rings of oriented simplicial spheres.
\end{abstract}

\maketitle
\markboth{}{}
\thispagestyle{empty}

\section{Introduction}
\begin{sloppypar}
Parseval--Rayleigh identities were introduced in the works \cite[Section 5]{AdiBeyondPositivity}, \cite[Section 7, Appendix A]{AdiPoly}, \cite{AdiCI} and have been used to establish Lefschetz properties for several classes of Artinian Gorenstein algebras, including generic complete intersections \cite[Corollary 4.7]{AdiCI} and generic Artinian reductions of IDP reflexive polytopes \cite[Theorem 1.2]{AdiPoly}. This suggests that similar identities should exist for any graded Artinian Gorenstein $\k$-algebra. We formulate and prove such identities.
\end{sloppypar}

In order to state a theorem, we assume that $\k$ is a field of characteristic $p$, $S = \k[x_0,\ldots,x_{m-1}]$ is a polynomial ring, and $I = (g_0,\ldots,g_{n-1})$ is a homogeneous Artinian Gorenstein ideal, where $g_0,\ldots,g_{n-1}$ is a fixed minimal homogeneous generating set of $I$, which means that $R = S/I$ is a Gorenstein Artinian $\k$-algebra of socle degree $s$. Let $\circ : S \times S \to \k$ be the Kronecker pairing, that is, 
$$
x^\alpha\circ x^\beta=
\begin{cases}
1,& \alpha=\beta,\\
0,& \alpha\neq \beta,
\end{cases}
$$
and then extend $\circ$ to polynomials by bilinearity. Let $M_c$ be the set of monic monomials of degree $c$. We assume that an isomorphism $\vol : R_s \to \k$ is provided. Using $\vol$ we can define $\nu$ as the unique nonzero class $\nu \in R_s$ so that $\vol(\nu) = 1$. Let $I^{[p]} = (g_0^p,\ldots,g_{n-1}^p) \subset S$, the algebra $\hat R =S/I^{[p]}$ is Gorenstein Artinian of socle degree $\hat s = ps + m(p-1)$. We can define $\hat \vol : \hat R_{\hat s} \to \k$ to be the unique functional which satisfies 
$$\hat \vol(x_0^{p-1} \ldots x_{m-1}^{p-1} \nu^p) = 1.$$
A quick remark: we will try to avoid writing natural projections and other type-casting notation in order not to overload the exposition. For example, the expression $\nu^p$ in the formula above actually means $\pi((\nu')^p)$, where $\pi : S \to S/I^{[p]}$ is the natural projection and $\nu'$ is any lift of $\nu$ from $R_s$ to $S_s$. This is well-defined: the class $\pi((\nu')^p)$ is independent of the choice of the lift $\nu'$. Statements of this kind, together with their proofs, will be made explicit in the text. With all that in mind, we can formulate our main theorem:

\begin{theorem} \label{th:main}
There is a unique element $\varepsilon\in ((I^{[p]}:I)/I^{[p]})_{\hat s-s}$
satisfying $\hat\vol(\varepsilon\nu)=1$. The next identity holds for every polynomial $w \in S_s$ of degree $s$:
    $$
    \vol(w)=\sum_{u\in M_s}
    ((x_0^{p-1}\cdots x_{m-1}^{p-1}u^p)\circ(\varepsilon w))
    \vol(u)^p.
    $$
\end{theorem}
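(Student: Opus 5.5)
The proof has two parts. First I identify $(I^{[p]}:I)/I^{[p]}$ with a dual of $R$ to get existence and uniqueness of $\varepsilon$. Then I show that the functional $w\mapsto\hat\vol(\varepsilon w)$ equals $\vol$, and compute $\hat\vol$ on $S_{\hat s}$ monomial by monomial.

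\emph{Existence and uniqueness of $\varepsilon$.} Note that $(I^{[p]}:I)/I^{[p]}=0:_{\hat R}I\cong \operatorname{Hom}_{\hat R}(R,\hat R)$, since $I^{[p]}\subseteq I$ makes $R$ a quotient of $\hat R$. Because $\hat R$ is Artinian Gorenstein with socle degree $\hat s$ (taken as given in the setup), the pairing $\hat R_j\times\hat R_{\hat s-j}\to\k$, $(a,b)\mapsto\hat\vol(ab)$, is perfect. Hence the map
$$\Phi:\ (0:_{\hat R}I)_{\hat s-s}\to \operatorname{Hom}_\k(R_s,\k),\qquad \eta\mapsto\bigl(r\mapsto\hat\vol(\eta r)\bigr)$$
is well defined and injective. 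It is well defined because $\eta I\subseteq I^{[p]}$, and injective by perfectness of the pairing: $\eta$ kills all of $\hat R_s$ through $R_s$. The target is one-dimensional because $R_s\cong\k$. Equivalently, Matlis duality gives $\operatorname{Hom}_{\hat R}(R,\hat R)\cong\operatorname{Hom}_\k(R,\k)$ with the grading shifted by $\hat s$, so $\Phi$ is an isomorphism.

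So I need surjectivity, i.e.\ some $\eta\in(0:_{\hat R}I)_{\hat s-s}$ with $\hat\vol(\eta\nu)\neq0$. I would produce it by Matlis duality: the functional $\vol\circ(\text{projection }\hat R\to R)$ restricted to degree $s$ corresponds to a unique element $\eta$ of degree $\hat s-s$. Since this functional kills $I/I^{[p]}$, perfectness forces $\eta I=0$ in $\hat R$. Then $\varepsilon:=\Phi^{-1}(\vol)$ is the unique element with $\hat\vol(\varepsilon\nu)=1$. This duality step is the only genuinely structural input, and it is where I expect the main care to be needed.

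\emph{The identity.} The functional $w\mapsto\hat\vol(\varepsilon w)$ on $S_s$ vanishes on $I_s$ because $\varepsilon I\subseteq I^{[p]}$. It therefore factors through $R_s$, and since it takes the value $1$ on $\nu$, it equals $\vol$. It remains to show that for every $f\in S_{\hat s}$,
$$\hat\vol(f)=\sum_{u\in M_s}\bigl((x_0^{p-1}\cdots x_{m-1}^{p-1}u^p)\circ f\bigr)\vol(u)^p,$$
and then apply this with $f=\varepsilon w$ for any lift of $\varepsilon$. Both sides are linear, so it suffices to check monomials $f$ of degree $\hat s$.

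Each such monomial can be written uniquely as $x^a m^p$ with $0\le a_i\le p-1$ and $m$ a monomial. There are two cases.
\begin{itemize}
\item If $a=(p-1,\dots,p-1)$, then $\deg m=s$, and the right side equals $\vol(m)^p$. On the left, $m\equiv\vol(m)\nu\pmod I$ gives $m^p\equiv\vol(m)^p\nu^p\pmod{I^{[p]}}$, because $(g+h)^p=g^p+h^p$ in characteristic $p$ and $(\sum r_ig_i)^p=\sum r_i^pg_i^p$. So $\hat\vol(x^{p-1}m^p)=\vol(m)^p$ by the normalization of $\hat\vol$.
\item Otherwise $|a|<m(p-1)$, and the right side is $0$. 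Degree counting gives $p(\deg m-s)=m(p-1)-|a|>0$, so $\deg m>s$. Hence $m\in I$, so $m^p\in I^{[p]}$ and $\hat\vol(f)=0$.
\end{itemize}
This proves the formula. The same Frobenius computation also settles the well-definedness of $\nu^p$ mentioned in the paper.
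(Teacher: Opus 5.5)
Your proof is correct, and it takes a genuinely different route from the paper.

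The paper builds $\varepsilon$ homologically. It is the top component $P^{-m}(\hat c_0)=\varepsilon c_0$ of a comparison map $\hat F^\bullet\to F^\bullet$ between minimal resolutions, with $c_0$ normalized by $\vol$ through local duality. The inclusion $\varepsilon I\subseteq I^{[p]}$ comes from the self-duality $d_F^{-m}(c_0)=\sum_i g_ib_i$ of the Gorenstein resolution. The normalization comes from a homotopy between $N^\bullet P^\bullet$ and $D^\bullet\hat N^\bullet$, which yields $\varepsilon\nu\equiv x_0^{p-1}\cdots x_{m-1}^{p-1}\nu^p \pmod{I^{[p]}}$. The identity is then proved by writing $w$ as a multiple of $\nu$ plus an element of $I_s$. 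The $I_s$ part is handled by \cite[Lemma 3.2]{AdiCI}, which says the right-hand side vanishes on $I^{[p]}_{\hat s}$, and the $\nu$ part by Frobenius additivity.

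You instead get $\varepsilon$ directly from the perfect pairing on $\hat R$, as the element dual to $\vol\circ(\hat R_s\to R_s)$. You then prove a stronger statement that does not involve $\varepsilon$ at all: the functional $f\mapsto\sum_{u}\bigl((x_0^{p-1}\cdots x_{m-1}^{p-1}u^p)\circ f\bigr)\vol(u)^p$ equals $\hat\vol$ on all of $S_{\hat s}$, via the base-$p$ splitting $f=x^am^p$. The theorem then reduces to the observation that $\hat\vol(\varepsilon w)=\vol(w)$.

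What your route buys:
\begin{itemize}
\item It is shorter and more elementary.
\item It actually proves that $((I^{[p]}:I)/I^{[p]})_{\hat s-s}$ is one-dimensional, and hence the uniqueness claim. The paper only asserts this, yet relies on it in the simplicial-sphere example, where a candidate $\varepsilon$ is confirmed by checking the colon condition and the normalization.
\item It shows the identity holds for every polynomial lift of $\varepsilon$, and that it is really a formula for $\hat\vol$ in terms of $\vol$.
\end{itemize}

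What the paper's construction buys is a computational recipe and an interpretation of $\varepsilon$ as a chain-map component. This is how it obtains $\varepsilon=\prod_i g_i^{p-1}$ for complete intersections and the explicit $\varepsilon$ in the Burke example. Your duality argument characterizes $\varepsilon$ only implicitly.
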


The theorem is proved in Lemma \ref{lm:eps-in-ip-i}, Lemma \ref{lm:hat-vol-nu-eps}, Theorem \ref{th:submain}. Moreover, the proof shows that $\varepsilon$ is the polynomial representing the top component
$$P^{-m}:\hat F^{-m}\to F^{-m}$$
of a chain map between minimal free resolutions lifting the canonical projection $\hat R\to R$; see equation \eqref{eq:varepsilon-def} (in fact, this is how we define $\varepsilon$ in the main text).

In Section \ref{sec:examples}, we explain how the general theorem specializes to the previously known cases and then use it to produce a genuinely new example. First, in the complete intersection case (Subsection \ref{subsec:CI}), we recover the formula of \cite[Theorem 1.1]{AdiCI}. Second, for generic Artinian reductions of Stanley--Reisner rings of oriented simplicial spheres (Subsection \ref{subsec:spheres}), we recover the simplicial-sphere case of the Parseval--Rayleigh identities of \cite[Theorem A.1, Appendix A]{AdiPoly}. In this case our argument gives a new proof of these identities: we obtain them from the general homological construction of the element $\varepsilon$, which differs from the strategy in \cite{AdiPoly}. Finally, we compute $\varepsilon$ explicitly in homological terms for a specific non-complete-intersection graded Artinian Gorenstein $\k$-algebra (Subsection \ref{subsec:minimalnonci}), in order to demonstrate how the homological part of the proof works. The Parseval--Rayleigh identities for this example do not seem to be covered by the existing literature.

\subsection{Acknowledgments}
The author is grateful to Karim Adiprasito for giving him the opportunity to think about this problem during a visit to Sorbonne in November 2025. He also thanks Ryoshun Oba, Vasiliki Petrotou, and Mingzhi Zhang for extensive discussions during that visit.

\section{Proof of the main theorem} \label{sec:main}
All modules are $\mathbb Z$-graded. If $S=\k[x_0,\ldots,x_{m-1}]$ is a polynomial ring and $M$ is an $S$-module, we write $M_k$ for the $k$-th graded piece of $M$, and we use the notation $M(a)$ to denote the module with grading shifted by $a$, so $M(a)_d = M_{a+d}$. With this convention, a free $S$-module generated by an element of degree $a$ is abstractly isomorphic to $S(-a)$. By $S(-a)$ we mean a free graded $S$-module generated in degree $a$ with distinguished degree $a$ generator $\eta_a \in S(-a)$. We will use cohomological grading when we work with chain complexes, in order to ensure that it is not mixed up with internal grading. In other words, a complex of $S$-modules could be denoted by $F^{\bullet}$, and $F^{-5}_3$ would mean the $3$-rd graded piece of the $(-5)$-th term of the complex $F^{\bullet}$.

Let $\k$ be a field of positive characteristic $p$. Let $S = \k[x_0,\ldots,x_{m-1}]$ and let $I = (g_0,\ldots,g_{n-1}) \subset (x_0,\ldots,x_{m-1})$ be an Artinian Gorenstein homogeneous ideal, where $g_0,\ldots,g_{n-1}$ is a fixed minimal homogeneous generating set of $I$ (so, $n \geq m$); by this we mean that $S/I$ is a graded Artinian Gorenstein $\k$-algebra. Let $R = S/I$. Most of the time we omit writing the natural projection $\pi : S \to R$ explicitly, which means that if $f \in S_s$ is a polynomial and $\phi : R_s \to \k$ is a functional, we may write something like $\phi(f)$ without decorating the symbol $f$ with an overline or precomposing with $\pi$. By the Auslander-Buchsbaum formula, we have $m = pd_S(R)$, which implies that a minimal free $S$-resolution of $R$ has $m+1$ nonzero terms. Assume that $R$ has socle degree $s$, that is, $R_s \neq 0$ and $R_{i} = 0$ for every $i>s$. Since $R$ is Artinian Gorenstein, $R_s$ is a one-dimensional $\k$-vector space. Also assume that we are supplied with the choice of a $\k$-linear isomorphism $\vol : R_s \to \k$. Let $0 \to F^{-m} \to \ldots \to F^{-1} \to F^0 = S \to 0$ be a minimal free resolution of the $S$-module $R$. The map $\vol$ gives us a way to define a distinguished generator $c_0 \in F^{-m}$:
\begin{lemma} \label{lm:canonical-iso}
    There is a canonical isomorphism of the following one-dimensional $\k$-vector spaces:
    \begin{itemize}
        \item $\hom_{\k}(R(s), \k)_0$
        \item $\ext^m_S(R,S(-m-s))_0$
        \item $\hom_S(F^{-m},S(-m-s))_0$
    \end{itemize}
    The isomorphism of the first space with the last space gives us a canonical choice of an $S$-generator $c_0 \in F^{-m}$ of degree $m+s$, as long as the choice of $\vol$ is provided.
\end{lemma}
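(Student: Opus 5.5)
We will prove the lemma by two standard identifications: local (Matlis/graded) duality for the first isomorphism, and the computation of $\ext$ from the minimal free resolution for the second. Then we read off $c_0$ from the third space.

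\textbf{First isomorphism.} By graded local duality over $S$ (with canonical module $\omega_S = S(-m)$), for a finitely generated graded $S$-module $M$ of dimension $0$ there is a natural isomorphism
$$\ext^m_S(M, S(-m)) \cong \hom_{\k}(M,\k) = M^\vee,$$
where $M^\vee$ is the graded $\k$-dual. We apply this to $M = R(s)$, noting that $\ext^m_S(R(s),S(-m)) \cong \ext^m_S(R,S(-m-s))$ by shifting. Taking degree $0$ parts gives $\hom_\k(R(s),\k)_0 \cong \ext^m_S(R,S(-m-s))_0$. The left side is $\hom_\k(R_s,\k)$, which is one-dimensional because $R$ is Gorenstein of socle degree $s$; it contains $\vol$. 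To keep the argument self-contained we can also use an elementary route: compute $\ext^m_S(\k,S)$ via the Koszul complex, giving $\k(m)$, and then induct on length using the long exact sequence. Naturality of duality guarantees that the result is canonical.

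\textbf{Second isomorphism.} We compute $\ext^\bullet_S(R,S(-m-s))$ as the cohomology of $\hom_S(F^{\bullet},S(-m-s))$. Since $F^{-m-1}=0$, $\ext^m$ is the cokernel of $\hom_S(F^{-m+1},S(-m-s)) \to \hom_S(F^{-m},S(-m-s))$. The resolution is minimal, so the differentials have entries in $\m=(x_0,\dots,x_{m-1})$. Hence the image lies in $\m\cdot\hom_S(F^{-m},S(-m-s))$.

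We next need that $\hom_S(F^{-m},S(-m-s))_0$ meets this image trivially. Because $R$ is Gorenstein, $F^{-m}\cong S(-m-s)$ has rank one; this rank count can be read off from $\ext^m$ being one-dimensional together with minimality. Therefore $\hom_S(F^{-m},S(-m-s))\cong S$, and its degree-$0$ part is $\k$. The image lies in $\m$ and so has no degree-$0$ component. The projection is thus an isomorphism in degree $0$.

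\textbf{Choosing $c_0$.} Under the composite isomorphism, $\vol$ corresponds to a degree-$0$ homomorphism $\phi: F^{-m}\to S(-m-s)$. We define $c_0$ to be the unique degree-$(m+s)$ element of $F^{-m}$ with $\phi(c_0)=\eta_{m+s}$. It is a generator because $\phi$ is an isomorphism of rank-one free modules.

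The main obstacle is canonicity: we must ensure that the duality isomorphism is pinned down without arbitrary choices. Canonical duality is available, but the identification $\ext^m_S(\k,S(-m))\cong\k$ depends on an orientation of $x_0,\dots,x_m$, namely $dx_0\wedge\dots\wedge dx_{m-1}$. We will fix it via the Koszul complex on the ordered variables. We expect this to match the normalization $\hat\vol(x_0^{p-1}\cdots x_{m-1}^{p-1}\nu^p)=1$ used later.
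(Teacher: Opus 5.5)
Your proposal is correct and follows the paper's proof. It uses graded local duality (with the orientation of $x_0,\ldots,x_{m-1}$ fixing $\omega_S\cong S(-m)$) for the first identification, and the cokernel description of $\ext^m$ from the minimal resolution, with image in $\m\cdot\hom_S(F^{-m},S(-m-s))$ and hence zero in degree $0$, for the second; you then read off $c_0$ from $\phi_{\vol}$. You are slightly more explicit than the paper in noting that the degree argument relies on $F^{-m}\cong S(-m-s)$, though that fact is better justified by $\ext^m_S(R,S)\otimes_S\k$ being one-dimensional (i.e.\ $\omega_R$ cyclic) than by ``$\ext^m$ being one-dimensional,'' which is false as stated.
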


\begin{proof}
   We denote by $\m = (x_0,\ldots,x_{m-1}) \subset S$ the ideal generated by the variables. Graded local duality \cite[Section 21.11]{Eisenbud} gives a canonical isomorphism of graded $S$-modules
   $$\ext^m_S(R,S(-m)) \cong \hom_k(R,k),$$
   which, after twisting by $(-s)$ and taking $0$-th graded pieces, gives an isomorphism
   $$\ext^m_S(R,S(-m-s))_0 \cong \hom_k(R(s),k)_0.$$
   Here, the oriented basis $x_0,\ldots,x_{m-1}$ has been used to identify $\omega_S$ with $S(-m)$. Since $F^{\bullet}$ is a minimal graded free resolution of the $S$-module $R$, we have
   $$\ext^m_S(R,S(-m-s)) = \coker(\hom_S(F^{-(m-1)},S(-m-s)) \to \hom_S(F^{-m},S(-m-s))).$$
   There is the natural quotient map $\hom_S(F^{-m},S(-m-s)) \to \ext^m_S(R,S(-m-s))$. Since $F^{\bullet}$ is a minimal free resolution, we know that the differential $d : F^{-m} \to F^{-(m-1)}$ factors through $\m F^{-(m-1)}$. Since $\m$ has no elements of degree $0$, the $0$-th degree part is unaffected by the quotient, so the quotient map induces an isomorphism on $0$-th pieces
   $$\hom_S(F^{-m},S(-m-s))_0 \to \ext^m_S(R,S(-m-s))_0.$$
   Thus, there is a degree $0$ map $\phi_{\vol} \in \hom_S(F^{-m},S(-m-s))_0$ which corresponds to $\vol$, and one can choose $c_0 \in F^{-m}_{m+s}$ to be the unique element such that $\phi_{\vol}(c_0) = \eta_{m+s}$, where $\eta_{m+s}$ is the fixed generator of $S(-m-s)$. 
\end{proof}

Now, let $I^{[p]} = (g_0^p,\ldots,g_{n-1}^p)$ be the ideal in $S$. Let $\hat F^{\bullet} := F^{\bullet} \otimes_{S,Frob} S =  0 \to \hat F^{-m} \to \ldots \to \hat F^0 \to 0$. By Kunz's theorem, the Frobenius map $Frob : S \to S$ is flat, so $\hat F^{\bullet}$ is a minimal free resolution of an Artinian Gorenstein algebra $S/I^{[p]}$ of socle degree $\hat s = ps + m(p-1)$. We denote $\hat c_0 = c_0 \otimes 1 \in \hat F^{-m}$.

The quotient map $R \to \k$ gives rise to the chain map $N^{\bullet} : F^{\bullet} \to K^{\bullet}$ between the minimal free resolution $F^{\bullet}$ of the $S$-module $R$ and the Koszul complex $K^{\bullet}$, which is the minimal free resolution of the $S$-module $\k$. We assume that the linear order of the minimal generators $g_0,\ldots,g_{n-1}$ and variables $x_0,\ldots,x_{m-1}$ is fixed; this gives an ordered basis for $F^{-1}$ and $K^{-1}$. We say that $a_0,\ldots,a_{n-1}$ is a basis of $F^{-1}$ and $e_0,\ldots, e_{m-1}$ is a basis of $K^{-1}$. Since the order of variables is given, we can identify $K^{-m} = S(-m)$ with the distinguished generator $e:=e_0 \wedge \ldots \wedge e_{m-1}$. The map $N^{-m} : F^{-m} \to K^{-m}$ is now identified with a map $S(-m-s) \to S(-m)$ and hence with a polynomial $\nu$ of degree $s$. In other words, we have the identity $N^{-m}(c_0) = \nu e$. The polynomial $\nu$ depends on the choice of $N^{\bullet}$; however, its class in $S/I$ is canonical, as the next lemma states:

\begin{lemma} \label{lm:vol-nu}
    We have $\vol(\nu) = 1$. In particular, the class $[\nu] \in R_s$ is well-defined independently of the choice of lifting $N^{\bullet} : F^{\bullet} \to K^{\bullet}$.
\end{lemma}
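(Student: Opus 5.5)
Our plan is to compute $\vol(\nu)$ by applying $\hom_S(-,S(-m-s))$ to the chain map $N^\bullet$ and passing to cohomology in degree $m$. The map $N^\bullet$ lifts the projection $R\to\k$, so it induces a map
$$\ext^m_S(\k,S(-m-s))\to\ext^m_S(R,S(-m-s)).$$
By naturality of graded local duality, this map corresponds to the dual of the projection, i.e. to the inclusion $\hom_\k(\k,\k)\to\hom_\k(R,\k)$ after the twists. In degree $0$, with the twist $(-s)$, the source is $\hom_\k(\k(s),\k)_0$. This space is zero unless we track the degrees carefully. So we will instead twist by $S(-m)$ on the Koszul side: the relevant piece is $\ext^m_S(\k,S(-m))_0\cong\k$, generated by the class dual to $e$ (the isomorphism $\omega_S\cong S(-m)$ being fixed by the oriented basis). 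Its image is a degree-$0$ element of $\ext^m_S(R,S(-m))\cong\hom_\k(R,\k)$, namely the functional $R_0\to\k$ given by the augmentation.

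The cleaner route is to work at the cochain level. The element $e^*\in\hom_S(K^{-m},S(-m))_0$ is the functional sending $e\mapsto\eta_m$. Pulling back along $N^{-m}$ gives $e^*\circ N^{-m}\in\hom_S(F^{-m},S(-m))_s$, which sends $c_0\mapsto\nu\,\eta_m$. Under local duality, a degree-$s$ element of $\ext^m_S(R,S(-m))$ corresponds to a functional $R\to\k$ of degree $s$, hence to a functional on $R_s$. The main claim is then: the element $\phi_{\vol}$ of Lemma \ref{lm:canonical-iso}, viewed through the identification $\hom_S(F^{-m},S(-m-s))_0=\hom_S(F^{-m},S(-m))_s$, is characterized by sending $c_0\mapsto\eta$. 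Moreover, the local duality pairing $\ext^m_S(R,\omega_S)\times R\to\ext^m_S(\k,\omega_S)=\k$ is the Yoneda/module action. Hence the class $[e^*\circ N^{-m}]=\nu\cdot[\phi_{\vol}]$ corresponds to the functional $r\mapsto\vol(\nu r)$ restricted to $R_0$. This map is also the image of the generator $e^*$, namely the augmentation functional. Evaluating at $r=1$ gives $\vol(\nu)=1$.

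The main obstacle is to justify that graded local duality is compatible with the module structure and with naturality in this way. Concretely, we must show that the isomorphism $\ext^m_S(M,\omega_S)\cong\hom_\k(M,\k)$ is functorial in $M$ and $S$-linear, and that for $M=\k$ it sends $[e^*]$ to the identity. This follows from the standard construction of the isomorphism, which passes through $\ext^m_S(-,\omega_S)\cong\hom_S(-,E)$ with $E=H^m_\m(\omega_S)$ and socle generator given by the Koszul class $e^*$. Once these points are granted, the computation above is formal.

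For well-definedness: any two liftings of $R\to\k$ are homotopic, so the induced maps on $\ext^m$ agree. The identity $\vol(\nu)=1$ therefore holds for every choice of lifting. Since $R_s$ is one-dimensional and $\vol$ is an isomorphism, the class $[\nu]$ is determined uniquely.
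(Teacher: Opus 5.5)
Your proposal is essentially the paper's proof. You identify the class of $e^*\circ N^{-m}$ in $\ext^m_S(R,S(-m))_0$ in two ways: as $\nu\cdot[\phi_{\vol}]$, which corresponds to $r\mapsto\vol(\nu r)$, and as the pullback of the Koszul generator along $R\to\k$, which corresponds to the augmentation; you then evaluate at $1$. This relies on the same naturality, $S$-linearity and normalization ($[e^*]\mapsto\mathrm{id}_{\k}$) of graded local duality that the paper uses implicitly, and your well-definedness argument via $\dim R_s=1$ matches the paper's.

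The degree bookkeeping is wrong as written and should be fixed. The map $e^*\circ N^{-m}$ has degree $0$, not $s$, so its class is a functional on $R_0$ (as you correctly use at the end), not on $R_s$. Also $\hom_S(F^{-m},S(-m-s))_0=\hom_S(F^{-m},S(-m))_{-s}$, not the degree-$s$ piece, which is exactly what makes $\nu\cdot[\phi_{\vol}]$ land in degree $0$. The false start in your first paragraph and the loosely stated ``pairing'' can be replaced by simply citing $S$-linearity and naturality of the duality isomorphism.
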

\begin{proof}
    The map $N^{-m} : F^{-m} \to S(-m)$ represents a class in $\ext^m_S(R,S(-m))$. By graded local duality \cite[Section 21.11]{Eisenbud}, $\ext^m_S(R,S(-m))_0 = \hom_{\k}(R,\k)_0$, and under this identification the quotient class of the map $N^{-m} : F^{-m} \to K^{-m}$ in $\ext^m_S(R,S(-m))$ corresponds exactly to the quotient map $R \to \k$. In order to show that $\vol(\nu)=1$, recall how $c_0$ was chosen. 
    The functional
    $$\vol\in \hom_k(R(s),k)_0$$
    corresponds, by the Lemma \ref{lm:canonical-iso}, to a degree $0$ map
    $$\phi_{\vol}:F^{-m}\to S(-m-s)$$
    satisfying
    $$\phi_{\vol}(c_0)=\eta_{m+s},$$
    where $\eta_{m+s}$ is the distinguished generator of $S(-m-s)$. Since $N^{-m}(c_0)=\nu e$, the map $N^{-m}$ is the composition
    $$F^{-m}\xrightarrow{\phi_{\vol}} S(-m-s)\xrightarrow{\cdot \nu} S(-m),$$
    where the second map is multiplication by $\nu$. Under graded local duality, this composition corresponds to the functional
    $$R\to k,\qquad f \mapsto \vol(\nu f).$$
    Evaluating this functional at $1\in R_0$, we get
    $$1\mapsto \vol(\nu).$$

    On the other hand, as explained above, the same $\ext$ class is represented by the chain map $N^\bullet:F^\bullet\to K^\bullet$, which lifts the quotient map $R\to k$. Hence the corresponding functional sends $1$ to $1$. Therefore
    $$\vol(\nu)=1.$$

    Finally, since $R_s$ is one-dimensional and $\vol:R_s\to k$ is an isomorphism, there is a unique class in $R_s$ whose volume is equal to $1$. Hence the class $[\nu]\in R_s$ is independent of the choice of the lifting $N^\bullet$.
\end{proof}

By homological algebra, there exists a chain map $P^{\bullet} : \hat F^{\bullet} \to F^{\bullet}$ lifting the quotient map $S/I^{[p]} \to S/I$. The algebra $\hat R = S/I^{[p]}$ is a graded Gorenstein Artinian $\k$-algebra with socle degree $\hat s = ps + m (p-1)$. Since the generators $c_0$ and $\hat c_0$ have been chosen, the map $P^{-m} : \hat F^{-m} \to F^{-m}$ gives a polynomial $\varepsilon \in S_{\hat s - s}$ of degree $\hat s - s$ such that 
\begin{equation}\label{eq:varepsilon-def}
    P^{-m}(\hat c_0) = \varepsilon c_0.
\end{equation}
The polynomial $\varepsilon$ depends on the choice of the chain map $P^{\bullet}$; however, the class of $\varepsilon$ in the one-dimensional $\k$-vector space $((I^{[p]}:I)/I^{[p]})_{\hat s - s}$ is well-defined and independent of the choice of the chain map $P^{\bullet}$.

\begin{lemma} \label{lm:eps-in-ip-i}
The class $[\varepsilon] \in ((I^{[p]}:I)/I^{[p]})_{\hat s - s}$ is well-defined and independent of the choice of a chain map $P^{\bullet}$ lifting the quotient map $S/I^{[p]} \to S/I$.
\end{lemma}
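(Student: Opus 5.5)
Two things need to be shown: that $\varepsilon$ lies in $I^{[p]}:I$, and that its class modulo $I^{[p]}$ does not depend on the choice of $P^\bullet$.

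\emph{Step 1: membership in $I^{[p]}:I$.} Dualize the resolutions with $\hom_S(-,S)$. Since $R$ and $\hat R$ are Artinian Gorenstein, the dual complexes $\hom_S(F^\bullet,S)$ and $\hom_S(\hat F^\bullet,S)$ are again resolutions, up to shift, of $R$ and $\hat R$. Their last terms are $\hom_S(F^{-m},S)\cong S(m+s)$ and $\hom_S(\hat F^{-m},S)\cong S(m+\hat s)$, with generators dual to $c_0$ and $\hat c_0$. The last cokernels are therefore $\ext^m_S(R,S)\cong R(m+s)$ and $\ext^m_S(\hat R,S)\cong \hat R(m+\hat s)$, generated by the classes $c_0^*$ and $\hat c_0^*$.

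The dual chain map $\hom(P^\bullet,S)$ induces a map $\ext^m_S(R,S)\to\ext^m_S(\hat R,S)$. On top terms it is given by $c_0^*\mapsto \varepsilon\,\hat c_0^*$, because $P^{-m}(\hat c_0)=\varepsilon c_0$. This induced map is a well-defined $S$-module homomorphism $R\to \hat R$ (up to shift), sending $1\mapsto\varepsilon$. Such a homomorphism exists only if $I\varepsilon\subset I^{[p]}$, i.e.\ $\varepsilon\in I^{[p]}:I$.

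\emph{Step 2: independence of $P^\bullet$.} Two chain maps lifting the same map $\hat R\to R$ are chain homotopic. So $P-P'=dh+hd$ with $h^{j}:\hat F^{j}\to F^{j-1}$. In degree $-m$ there is no $F^{-m-1}$, so
\[
P^{-m}-P'^{-m}=h^{-m+1}\circ \hat d^{-m},
\]
where $\hat d^{-m}:\hat F^{-m}\to\hat F^{-m+1}$. Hence $(\varepsilon-\varepsilon')c_0=h(\hat d(\hat c_0))$.

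To see that $\varepsilon-\varepsilon'\in I^{[p]}$, dualize again: $\hom(P,S)-\hom(P',S)$ is null-homotopic, so the two induced maps on $\ext^m$ coincide. Concretely, the composite
\[
\hom(F^{-m},S)\to\hom(\hat F^{-m},S)\to\ext^m(\hat R,S)
\]
for $P-P'$ factors through the image of $\hom(\hat F^{-m+1},S)$, which vanishes in $\ext^m$. Therefore $(\varepsilon-\varepsilon')\hat c_0^*$ lies in the image of $\hom(\hat F^{-m+1},S)\to\hom(\hat F^{-m},S)$. By Gorenstein self-duality of $\hat F^\bullet$, this image is $I^{[p]}\hat c_0^*$. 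So $\varepsilon-\varepsilon'\in I^{[p]}$.

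\emph{Step 3: one-dimensionality.} The paper asserts that $((I^{[p]}:I)/I^{[p]})_{\hat s-s}$ is one-dimensional, and the lemma does not strictly require this. If it is needed, use that $(I^{[p]}:I)/I^{[p]}\cong\hom_S(R,\hat R)$ is the socle-type dual. Since $\hat R$ is Gorenstein, $\hom_S(R,\hat R)\cong\hom_\k(R,\k)$ shifted by $\hat s$. Its degree $\hat s-s$ piece corresponds to $(R_s)^*$, which is one-dimensional.

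The main obstacle is Step 2: identifying the image of $\hom(\hat F^{-m+1},S)\to\hom(\hat F^{-m},S)$ precisely with $I^{[p]}\hat c_0^*$. This uses the self-duality of the minimal resolution of a Gorenstein algebra, namely that the dual of $\hat F^\bullet$ resolves $\hat R$ with the last differential given by the generators $g_i^p$. I would cite Buchsbaum--Eisenbud duality for Gorenstein ideals, or the fact that $\ext^m(\hat R,S)\cong\hat R$ is cyclic with annihilator $I^{[p]}$.
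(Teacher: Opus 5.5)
Your proof is correct and is essentially the paper's argument in dual form. The paper reads off $\varepsilon I\subseteq I^{[p]}$ and $\varepsilon-\varepsilon'\in I^{[p]}$ from the top square of $P^\bullet$ and the top component of a homotopy, using explicit self-duality $d_F^{-m}(c_0)=\sum_i g_i b_i$ (hence $d_{\hat F}^{-m}(\hat c_0)=\sum_i g_i^p\hat b_i$); you encode the same fact as $\ext^m_S(R,S)$ and $\ext^m_S(\hat R,S)$ being cyclic on $[c_0^*]$, $[\hat c_0^*]$ with annihilators $I$ and $I^{[p]}$. The second dualization in your Step 2 is unnecessary, since $h(\hat d(\hat c_0))\in I^{[p]}F^{-m}$ directly, and your Step 3 proves the one-dimensionality that the paper only asserts.
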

\begin{proof}
    By \cite[Corollary 21.16(b)]{Eisenbud}, the chain complex $(F^*)^\bullet:=\hom_S(F^{-m-\bullet},S(-m-s))$ is abstractly isomorphic to $F^\bullet$. We may choose an isomorphism such that its component $F^0 \to (F^*)^0=\hom_S(F^{-m},S(-m-s))$
    sends $1\in F^0=S$ to the map $F^{-m}\to S(-m-s)$ sending $c_0$ to the distinguished generator $\eta_{m+s}$ of $S(-m-s)$. Let $a_0,\ldots,a_{n-1}$ be the chosen basis of $F^{-1}$, so that
    $d_F^{-1}(a_i)=g_i$.
    Under the self-duality isomorphism, $F^{-(m-1)}$ is identified with
    $$(F^*)^{-(m-1)}=\hom_S(F^{-1},S(-m-s)).$$
    Let $b_0,\ldots,b_{n-1}$ be the basis of $F^{-(m-1)}$ corresponding to the dual basis $a_0^\vee,\ldots,a_{n-1}^\vee$, where $a_i^\vee(a_j)=\delta_{ij} \eta_{m+s}$. Here $\eta_{m+s}$ is the distinguished generator of $S(-m-s)$. Under this identification, the top differential $d_F^{-m}:F^{-m}\to F^{-(m-1)}$ corresponds to the dual of the first differential
    $d_F^{-1}:F^{-1}\to F^0=S$.
    Therefore, if $d_F^{-m}(c_0)=\sum_i h_i b_i$,
    then the coefficients $h_i$ are determined by dualizing the formula $d_F^{-1}(a_i)=g_i$.
    Indeed, the dual map $\hom_S(F^0,S(-m-s))\to \hom_S(F^{-1},S(-m-s))$ sends the map $1\mapsto \eta_{m+s}$ to the map $F^{-1}\to S(-m-s)$ given by $a_i\mapsto g_i \eta_{m+s}$. In the dual basis this map is exactly $\sum_i g_i a_i^\vee$. Hence, after transporting back through the self-duality isomorphism, we get
    $$d_F^{-m}(c_0)=\sum_i g_i b_i.$$
    In particular, the coordinates of $d_F^{-m}(c_0)$ generate the ideal $(g_0,\ldots,g_{n-1})=I$. Knowing that, the identities
    $$P^{-(m-1)} d_{\hat F}^{-m} (\hat c_0) = d^{-(m-1)}_F P^{-m}(\hat c_0),$$
    $$\sum_i g_i^p P^{-(m-1)}(\hat b_i) = \sum_i g_i \varepsilon  b_i,$$
    give us $\varepsilon I \subset I^{[p]}$, which proves $\varepsilon \in (I^{[p]} : I)$.

    Now we prove that the class modulo $I^{[p]}$ is independent of the choice of $P^{\bullet}$. Let $P^{\bullet}$ and $(P')^{\bullet}$ be two chain maps $\hat F^{\bullet}\to F^{\bullet}$ lifting the same quotient map $S/I^{[p]}\to S/I$. Since they lift the same map on homology, they are chain-homotopic. Hence there exists a homotopy $H$ such that
    $$P^{\bullet}-(P')^{\bullet}=d_FH+Hd_{\hat F}.$$
    Looking at degree $-m$, the term $d_FH$ vanishes because $F^{-m-1}=0$. Therefore
    $$(P^{-m}-(P')^{-m})(\hat c_0)=H^{-(m-1)}d_{\hat F}^{-m}(\hat c_0).$$
    But from the formula above we know that
    $$d_{\hat F}^{-m}(\hat c_0)=\sum_i g_i^p\hat b_i,$$
    hence $d_{\hat F}^{-m}(\hat c_0)\in I^{[p]}\hat F^{-(m-1)}$. Since $H^{-(m-1)}$ is $S$-linear, it follows that
    $$(P^{-m}-(P')^{-m})(\hat c_0)\in I^{[p]}F^{-m}.$$
    Write
    $$P^{-m}(\hat c_0)=\varepsilon c_0,\qquad (P')^{-m}(\hat c_0)=\varepsilon'c_0.$$
    Then
    $$(\varepsilon-\varepsilon')c_0\in I^{[p]}F^{-m}.$$
    Since $F^{-m}$ is a free $S$-module with generator $c_0$, this implies $\varepsilon-\varepsilon'\in I^{[p]}$. Therefore the class of $\varepsilon$ in $((I^{[p]}:I)/I^{[p]})_{\hat s-s}$ is independent of the choice of the lifting $P^{\bullet}$.
\end{proof}

Before proving the final result, we will prove the next lemma, which will be useful later and, in some sense, gives us a way to compute $\varepsilon$ in a non-homological way. Let $\hat \vol : \hat R_{\hat s} \to k$ be the unique linear functional satisfying
$$\hat \vol(x_0^{p-1}\cdots x_{m-1}^{p-1}\nu^p)=1.$$
Then the next lemma holds:

\begin{lemma} \label{lm:hat-vol-nu-eps}
    We have $\hat \vol(\nu\varepsilon)=1$.
\end{lemma}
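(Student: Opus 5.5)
The plan is to compute the top component of a single chain map $\hat F^{\bullet}\to K^{\bullet}$ in two different ways, and to compare the answers using the uniqueness of comparison maps up to homotopy. The argument is modelled on the proof of Lemma \ref{lm:eps-in-ip-i}.

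\emph{First factorization.} The composite $N^{\bullet}\circ P^{\bullet}:\hat F^{\bullet}\to K^{\bullet}$ lifts the composite $\hat R\to R\to\k$, which is the quotient map $\hat R\to \k$. Its top component sends $\hat c_0\mapsto \varepsilon c_0\mapsto \varepsilon\nu e$, by \eqref{eq:varepsilon-def} and the definition of $\nu$.

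\emph{Second factorization.} Apply the exact functor $-\otimes_{S,Frob}S$ to $N^{\bullet}$. This gives a chain map $\hat N^{\bullet}:\hat F^{\bullet}\to \hat K^{\bullet}$, where $\hat K^{\bullet}=K^{\bullet}\otimes_{S,Frob}S$ is the Koszul complex on $x_0^p,\ldots,x_{m-1}^p$. Flatness of Frobenius (Kunz) ensures that $\hat K^{\bullet}$ resolves $S/(x_0^p,\ldots,x_{m-1}^p)$ and that $\hat N^{\bullet}$ lifts the natural surjection $\hat R\to S/(x_0^p,\ldots,x_{m-1}^p)$. Since $N^{-m}(c_0)=\nu e$ and Frobenius raises coefficients to the $p$-th power, we get $\hat N^{-m}(\hat c_0)=\nu^p\hat e$, where $\hat e=e\otimes 1$.

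Next we write down an explicit comparison map $Q^{\bullet}:\hat K^{\bullet}\to K^{\bullet}$ lifting $S/(x_i^p)\to\k$. On basis elements set
$$Q^{-k}(\hat e_{i_1}\wedge\cdots\wedge\hat e_{i_k})=x_{i_1}^{p-1}\cdots x_{i_k}^{p-1}\,e_{i_1}\wedge\cdots\wedge e_{i_k}.$$
To check that this is a chain map, compare the two sides on a basis element $\hat e_{i_1}\wedge\cdots\wedge\hat e_{i_k}$. Applying $d_{\hat K}$ first and then $Q$, the term omitting $\hat e_{i_j}$ carries the coefficient $x_{i_j}^p\prod_{l\neq j}x_{i_l}^{p-1}$. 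Applying $Q$ first and then $d_K$, the same term carries $x_{i_j}\prod_{l}x_{i_l}^{p-1}$. These coefficients agree, with the same signs. This is the one place where an explicit verification is needed, and it is the step I expect to need the most care, mainly with signs; the computation itself is routine. In top degree it gives $Q^{-m}(\hat e)=x_0^{p-1}\cdots x_{m-1}^{p-1}e$. Hence $Q^{\bullet}\circ\hat N^{\bullet}$ also lifts $\hat R\to\k$, and its top component sends $\hat c_0\mapsto x_0^{p-1}\cdots x_{m-1}^{p-1}\nu^p e$.

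\emph{Comparison.} The chain maps $N^{\bullet}P^{\bullet}$ and $Q^{\bullet}\hat N^{\bullet}$ lift the same map $\hat R\to\k$, so they are chain homotopic via some homotopy $H$. In degree $-m$ the term $d_KH$ vanishes because $K^{-m-1}=0$, so the difference of the top components is $H^{-(m-1)}d_{\hat F}^{-m}$. In the proof of Lemma \ref{lm:eps-in-ip-i} we showed $d_{\hat F}^{-m}(\hat c_0)=\sum_i g_i^p\hat b_i\in I^{[p]}\hat F^{-(m-1)}$. Therefore
$$\left(\varepsilon\nu-x_0^{p-1}\cdots x_{m-1}^{p-1}\nu^p\right)e\in I^{[p]}K^{-m}.$$
Since $e$ generates the free module $K^{-m}$, this says $\varepsilon\nu\equiv x_0^{p-1}\cdots x_{m-1}^{p-1}\nu^p \pmod{I^{[p]}}$. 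Applying $\hat\vol$ to both sides and using its defining normalization gives $\hat\vol(\nu\varepsilon)=1$.
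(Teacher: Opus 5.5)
Your proof is correct and is essentially the paper's argument: your $Q^{\bullet}$ is the paper's comparison map $D^{\bullet}$, and you likewise compare the top components of $N^{\bullet}P^{\bullet}$ and $D^{\bullet}\hat N^{\bullet}$ through a chain homotopy, using $d_{\hat F}^{-m}(\hat c_0)\in I^{[p]}\hat F^{-(m-1)}$ to get $\varepsilon\nu\equiv x_0^{p-1}\cdots x_{m-1}^{p-1}\nu^p \pmod{I^{[p]}}$ and then applying $\hat\vol$. Your explicit check that $Q^{\bullet}$ is a chain map, which the paper treats as standard, is correct, and so is your reason that the $d_K H$ term vanishes in degree $-m$, namely $K^{-m-1}=0$.
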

\begin{proof}
Let $K^\bullet$ be the Koszul complex resolving $k$, and let $\hat K^\bullet=K^\bullet\otimes_{S,\operatorname{Frob}}S$ be the Koszul complex on $x_0^p,\ldots,x_{m-1}^p$. We have the chain map
$$\hat N^\bullet:\hat F^\bullet\to \hat K^\bullet$$
obtained from $N^\bullet$ by applying Frobenius. In top degree,
$$\hat N^{-m}(\hat c_0)=\nu^p(\hat e_0\wedge\cdots\wedge \hat e_{m-1}).$$

Let $D^\bullet:\hat K^\bullet\to K^\bullet$ be the standard comparison map lifting the quotient map $S/(x_0^p,\ldots,x_{m-1}^p)\to k$. In degree $-m$, it is given by
$$D^{-m}(\hat e_0\wedge\cdots\wedge \hat e_{m-1})
=
x_0^{p-1}\cdots x_{m-1}^{p-1}(e_0\wedge\cdots\wedge e_{m-1}).$$
Therefore
$$D^{-m}\hat N^{-m}(\hat c_0)
=
x_0^{p-1}\cdots x_{m-1}^{p-1}\nu^p(e_0\wedge\cdots\wedge e_{m-1}).$$
On the other hand, the composition $N^\bullet P^\bullet:\hat F^\bullet\to K^\bullet$ satisfies
$$N^{-m}P^{-m}(\hat c_0)
=
N^{-m}(\varepsilon c_0)
=
\varepsilon\nu(e_0\wedge\cdots\wedge e_{m-1}).$$
The two chain maps
$$N^\bullet P^\bullet,\qquad D^\bullet\hat N^\bullet:\hat F^\bullet\to K^\bullet$$
lift the same quotient map $S/I^{[p]}\to k$.  We demonstrate this by the following commutative diagrams: the right commutative-up-to-homotopy square is a chain lift of the left commutative square:

\[
\begin{tikzcd}
{S/I^{[p]}} && {S/(x_0^p,\ldots,x_{m-1}^p)} \\
{S/I}       && {S/(x_0,\ldots,x_{m-1})}
\arrow[from=1-1, to=1-3]
\arrow[from=1-1, to=2-1]
\arrow[from=2-1, to=2-3]
\arrow[from=1-3, to=2-3]
\end{tikzcd}
\qquad
\begin{tikzcd}
{\hat F^\bullet} && {\hat K^\bullet} \\
{F^\bullet}      && {K^\bullet}
\arrow[from=1-1, to=1-3, "\hat N^\bullet"]
\arrow[from=1-1, to=2-1, "P^\bullet"]
\arrow[from=2-1, to=2-3, "N^\bullet"]
\arrow[from=1-3, to=2-3, "D^\bullet"]
\end{tikzcd}
\]
Hence $N^\bullet P^\bullet$ and $D^\bullet\hat N^\bullet$ are chain-homotopic. Hence, in degree $-m$, their difference has the form $H^{-(m-1)}d_{\hat F}^{-m}$ for some $S$-linear map $H^{-(m-1)}:\hat F^{-(m-1)}\to K^{-m}$, because $\hat F^{-m-1}=0$. Evaluating on $\hat c_0$ and using $d_{\hat F}^{-m}(\hat c_0)\in I^{[p]}\hat F^{-(m-1)}$, we see that the difference of the two top-degree coefficients lies in $I^{[p]}$. Since $d_{\hat F}^{-m}(\hat c_0)\in I^{[p]}\hat F^{-(m-1)}$, we get
$$\varepsilon\nu-x_0^{p-1}\cdots x_{m-1}^{p-1}\nu^p\in I^{[p]}.$$
Thus these two polynomials define the same class in $\hat R=S/I^{[p]}$. Applying $\hat\vol$ gives
$$\hat\vol(\varepsilon\nu)
=
\hat\vol(x_0^{p-1}\cdots x_{m-1}^{p-1}\nu^p)
=
1,$$
by the definition of $\hat\vol$. This proves the lemma.
\end{proof}

\begin{theorem} \label{th:submain}
Let $M_s$ be the set of monic monomials in $m$ variables of degree $s$.
Let $\varepsilon \in ((I^{[p]}:I)/I^{[p]})_{\hat s-s}$ be defined as above. Then for every
$w\in S_s$ we have
$$
\vol(w)
=
\sum_{u\in M_s}
\left(
(x_0^{p-1}\cdots x_{m-1}^{p-1}u^p)\circ(\varepsilon w)
\right)
\vol(u)^p.
$$
\end{theorem}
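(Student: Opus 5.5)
The plan is to read the right-hand side as a linear functional on $\hat R_{\hat s}$, and then identify it with $\hat\vol$. For $f\in S_{\hat s}$ put
$$\Phi(f)=\sum_{u\in M_s}\bigl((x_0^{p-1}\cdots x_{m-1}^{p-1}u^p)\circ f\bigr)\vol(u)^p .$$
This is $\k$-linear in $f$, because only the coefficients of $f$ enter linearly; the $\vol(u)^p$ are fixed scalars. The right-hand side of the theorem is $\Phi(\varepsilon w)$. The argument then has three steps:
\begin{enumerate}
\item[(i)] $\Phi$ vanishes on $(I^{[p]})_{\hat s}$, so it descends to $\hat R_{\hat s}$.
\item[(ii)] $\Phi(x_0^{p-1}\cdots x_{m-1}^{p-1}\nu^p)=1$, hence $\Phi=\hat\vol$, since $\hat R_{\hat s}$ is one-dimensional.
\item[(iii)] $\hat\vol(\varepsilon w)=\vol(w)$.
\end{enumerate}
A side benefit of (i) is that the formula does not depend on the representative of the class $\varepsilon$ modulo $I^{[p]}$. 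This matters, since Lemma \ref{lm:eps-in-ip-i} only fixes that class.

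For (i), I would use the Frobenius decomposition of $S$. Every $h\in S$ is uniquely a sum $h=\sum_{a}x^a\sum_{v}h_{a,v}v^p$. Here $a$ ranges over exponent vectors with $0\le a_j\le p-1$, $v$ over monic monomials, and $h_{a,v}\in\k$. Take $f=g_i^p h$. Multiplying by $g_i^p$ preserves this decomposition, since $g_i^p$ is a polynomial in $p$-th powers with $p$-th power coefficients after collecting. Consequently the $x^{(p-1,\dots,p-1)}$-component of $f$ is $x_0^{p-1}\cdots x_{m-1}^{p-1}\sum_v h_{p-1,v}(g_iv)^p$. In characteristic $p$, the coefficient of $u^p$ in $(g_iv)^p$ equals (coefficient of $u$ in $g_iv$)$^p$. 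Summing over $u$ therefore gives
$$\Phi(f)=\sum_v h_{p-1,v}\Bigl(\sum_{u}[u](g_iv)\,\vol(u)\Bigr)^p=\sum_v h_{p-1,v}\vol(g_iv)^p=0,$$
because $g_iv\in I$. Here I use the additivity of Frobenius, $(\sum c_u\vol(u))^p=\sum c_u^p\vol(u)^p$. The same computation with $h=1$ and $g_i$ replaced by $\nu$ gives $\Phi(x_0^{p-1}\cdots x_{m-1}^{p-1}\nu^p)=\vol(\nu)^p=1$, using Lemma \ref{lm:vol-nu}. This establishes (ii).

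For (iii), since $R_s$ is one-dimensional and $\vol(\nu)=1$, we can write $w=\vol(w)\nu+j$ with $j\in I_s$. By Lemma \ref{lm:eps-in-ip-i}, $\varepsilon j\in I^{[p]}$. By Lemma \ref{lm:hat-vol-nu-eps}, $\hat\vol(\varepsilon\nu)=1$. Hence $\hat\vol(\varepsilon w)=\vol(w)$. Combined with (ii), this gives $\Phi(\varepsilon w)=\vol(w)$, which is the claim.

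The main obstacle is step (i): the bookkeeping showing that the top $x^{p-1}$-component of $g^p h$ is $\sum_v h_{p-1,v}(gv)^p$. This requires that multiplication by a $p$-th power commutes with the Frobenius-component projection. It also requires handling the possibly imperfect field correctly: no $p$-th roots are taken, only the identity $[u^p](G^p)=([u]G)^p$.
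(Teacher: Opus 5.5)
Your proposal is correct and is essentially the paper's own argument. The paper likewise writes $w$ as a multiple of $\nu$ plus an element of $I_s$, shows that the right-hand side functional kills $I^{[p]}$ by the same Frobenius-component computation (its reproof of \cite[Lemma 3.2]{AdiCI}), and evaluates it on $x_0^{p-1}\cdots x_{m-1}^{p-1}\nu^p$ as $\vol(\nu)^p=1$.

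The differences are cosmetic. You package this as $\Phi=\hat\vol$ and quote the statement of Lemma \ref{lm:hat-vol-nu-eps}, whereas the paper uses the congruence $\varepsilon\nu\equiv x_0^{p-1}\cdots x_{m-1}^{p-1}\nu^p \pmod{I^{[p]}}$ from that lemma's proof; your version has the small bonus of showing that $\hat\vol$ is well defined. One minor slip: in (ii) the multiplier should be $h=x_0^{p-1}\cdots x_{m-1}^{p-1}$, not $h=1$.
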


\begin{proof}
Since $R_s=(S/I)_s$ is one-dimensional and generated by the class of $\nu$,
every element of $S_s$ is a linear combination of $\nu$ and elements of $I_s$.
Therefore it is enough to prove the identity in the following two cases:
first, when $w=\nu$, and second, when $w\in I_s$. We first consider the case $w\in I_s$. Since $I$ is generated by
$g_0,\ldots,g_{n-1}$, it is enough to consider elements of the form
$w=hg_i$, where $\deg(hg_i)=s$. Then $\vol(w)=0$, because $w$ maps to zero
in $R_s$. On the other hand, since $\varepsilon\in (I^{[p]}:I)$, by Lemma \ref{lm:eps-in-ip-i}, we have $\varepsilon w\in I^{[p]}.$
Therefore the right-hand side is zero by \cite[Lemma 3.2]{AdiCI}. For the sake of completeness we repeat the proof here. So we need to show that for any $h'$ such that $\deg( h' g_i^p) = \hat s$ the identity
$$\sum_{u \in M_s} ((x_0^{p-1} \ldots x_{m-1}^{p-1} u^p) \circ (h'g_i^p))\vol(u)^p = 0$$
holds. To do this we decompose $h'$ as a sum of monomials, only monomials which are of the form $x_0^{p-1} \ldots x_{m-1}^{p-1} (h'')^{p}$, for some monomial $h''$,  will survive, so we have
$$\sum_{u \in M_s} ((x_0^{p-1} \ldots x_{m-1}^{p-1} u^p) \circ (x_0^{p-1} \ldots x_{m-1}^{p-1} (h'')^p g_i^p))\vol(u)^p = 0$$
$$\sum_{u \in M_s} (u^p) \circ ((h'')^p g_i^p)) \vol(u)^p = 0$$
$$\vol(h'' g_i)^p = 0$$
the last identity is true since $h'' g_i \in \ker \vol$. Hence the identity holds for all $w \in I_s$. It remains to prove the identity for $w=\nu$. By the proof of Lemma \ref{lm:hat-vol-nu-eps}, we have
$$\varepsilon\nu-x_0^{p-1}\cdots x_{m-1}^{p-1}\nu^p\in I^{[p]}.$$
Thus, again by \cite[Lemma 3.2]{AdiCI}, the right-hand side for $w=\nu$ is
equal to
$$\sum_{u\in M_s} (u^p\circ \nu^p)\vol(u)^p.$$
Therefore it remains to prove
$$\vol(\nu)=\sum_{u\in M_s} (u^p\circ \nu^p)\vol(u)^p.$$
We may now use the same proof as in \cite[Theorem 1.1]{AdiCI}. Since the Kronecker pairing is compatible with Frobenius, we have $u^p\circ \nu^p = (u\circ \nu)^p$. Hence
$$\sum_{u\in M_s} (u^p\circ \nu^p)\vol(u)^p=\sum_{u\in M_s} (u\circ \nu)^p\vol(u)^p.$$
Since the field has characteristic $p$, this is equal to
$$\left(\sum_{u\in M_s} (u\circ \nu)\vol(u)\right)^p.$$
By linearity of $\vol$, we get
$$\left(\sum_{u\in M_s} (u\circ \nu)\vol(u)\right)^p = \left(\vol\left(\sum_{u\in M_s}(u\circ \nu)u\right)\right)^p.$$
But
$$\sum_{u\in M_s}(u\circ \nu)u=\nu,$$
because this is just the expansion of $\nu$ in the monomial basis of $S_s$.
Therefore the right-hand side is $\vol(\nu)^p$. By Lemma \ref{lm:vol-nu}, we have $\vol(\nu)=1$. Hence
$$\vol(\nu)^p=1^p=1=\vol(\nu).$$
This proves the identity for $w=\nu$, and therefore for every $w\in S_s$.
\end{proof}

\section{Examples} \label{sec:examples}

We explain how Theorem \ref{th:main} specializes to the two main known cases: complete intersections, where it recovers the formula of \cite[Theorem 1.1]{AdiCI}, and generic Artinian reductions of Stanley--Reisner rings of oriented simplicial spheres, where it recovers the formula of \cite[Appendix A, Theorem A.1]{AdiPoly}. We then give an explicit non-complete-intersection example to illustrate how the same method computes the polynomial $\varepsilon$ beyond the previously known cases.

\subsection{Complete intersections} \label{subsec:CI}
Let $S = \k[x_0,\ldots,x_{n-1}]$. The case when $I=(g_0,\ldots,g_{n-1})$ is a homogeneous complete intersection ideal has been described in \cite{AdiCI}. For complete intersections, we have $n=m$, so we will use the number $n$ for both, also we have socle degree of $S/I$ is equal to $s = -m + \sum_i \deg g_i$. As in Section \ref{sec:main}, we denote by $F^\bullet,\hat F^\bullet,K^\bullet,\hat K^\bullet$, respectively, the minimal free resolution of $S/I$, the minimal free resolution of $S/I^{[p]}$, the Koszul complex resolving $S/(x_0,\ldots,x_{n-1})$,
and the Koszul complex resolving $S/(x_0^p,\ldots,x_{n-1}^p)$. We denote the corresponding projection morphism by $\pi : S \to S/I$. We assume that the complete intersection ideal $I$ is provided with the choice of an ordered minimal homogeneous generating set, that is $I = (g_0,\ldots,g_{n-1})$. The $i$-th basis vectors of the $S$-modules $F^{-1}, \hat F^{-1}, K^{-1}, \hat K^{-1}$ will be denoted by $a_i,\hat a_i, e_i, \hat e_i$. Instead of choosing $\vol$, they choose the morphism $N^{-1} : F^{-1} \to K^{-1}$ as initial data, which commutes with differentials. This means that if $N^{-1}(a_i) =  \sum_j N^{-1}_{ij} e_j $, then $d_F^{-1} (a_i) = d_K^{-1} N^{-1} (a_i)$ can be read as $g_i = \sum_j N^{-1}_{ij} x_j$, or, in other words, $N^{-1}$ is an $n \times n$ matrix with entries in the polynomial ring $S$ such that
$$
\begin{pmatrix}
g_0\\
\vdots\\
g_{n-1}
\end{pmatrix}
=
N^{-1}
\begin{pmatrix}
x_0\\
\vdots\\
x_{n-1}
\end{pmatrix}.
$$
Compare with \cite[Formula 3]{AdiCI}. Notice that $N^{-1}$ here refers to cohomological grading; it does not denote an inverse operator.

Since both complexes $F^\bullet$ and $K^\bullet$ are Koszul complexes, the morphism $N^{-1}$ determines the whole morphism $N^{\bullet} : F^{\bullet} \to K^{\bullet}$ as long as it preserves products. In fact, in the top degree we have $N^{-n}(a_0 \wedge \ldots \wedge a_{n-1}) = N^{-1}(a_0) \wedge \ldots \wedge N^{-1}(a_{n-1}) = z_0 e_0 \wedge \ldots \wedge e_{n-1}$, where $z_0 = \det(N^{-1})$ (in our terminology, it is called $\nu = \det(N^{-1})$). This means that the corresponding $\vol : R_s \to \k$, by Lemma \ref{lm:vol-nu}, should be chosen as $\vol(z_0) = 1$. This is exactly their normalization; see \cite[Formula 5]{AdiCI}. 

There is also a natural choice for the morphism $P : \hat F^{\bullet} \to F^{\bullet}$, since, again, both complexes are Koszul complexes. We can define $P^{-1}(\hat a_i) = g_i^{p-1} a_i$, since this is the most obvious morphism $\hat F^{-1} \to F^{-1}$ which commutes with the differential, after that there is a unique morphism $P: \hat F^\bullet \to F^\bullet$ of comutative $dg$-algebras, so we can deduce that
$$P^{-n}(\hat a_0 \wedge \ldots \wedge \hat a_{n-1}) = g_0^{p-1} \ldots g_{n-1}^{p-1} a_0 \wedge \ldots \wedge a_{n-1}.$$ In other words, we have $\varepsilon = g_0^{p-1} \ldots g_{n-1}^{p-1}$. After that, the formula from Theorem \ref{th:main} becomes
$$\vol (w) = \sum_{u \in M_s} ((x_0^{p-1} \ldots x_{n-1}^{p-1} u^p) \circ (g_0^{p-1} \ldots g_{n-1}^{p-1} w)) \vol(u)^p,$$
which is precisely the formula in \cite[Theorem 1.1]{AdiCI}.

\subsection{Simplicial spheres} \label{subsec:spheres}
Let $X$ be an oriented simplicial sphere, which means a finite simplicial complex whose geometric realization is homeomorphic to the $n$-sphere, and for every $n$-facet $F$ there is a chosen positive ordering (that is, a total ordering up to even permutation) on the set of vertices of every facet $F = [v_0,\ldots,v_n]$. All orderings should be compatible in the sense that if two $n$-facets share the same $(n-1)$-face, then the induced orientations on that $(n-1)$-face should be opposite. By $X_0$ we denote the finite set of vertices of the simplicial complex $X$. Let $\k$ be a field of positive characteristic $p$. We denote by $\k[X] = \k[X_0]/I$ the Stanley-Reisner ring, so $I$ is the nonface ideal. By $\tk = \k(\theta_{ij})$ we mean the rational function field with indeterminates $\theta_{ij}$ for $0\leq i \leq n$ and $j \in X_0$. Let $\theta_i = \sum_j \theta_{ij} x_j \in \tk[X_0]$ and $A(X) = \tk[X]/(\theta_0,\ldots,\theta_n)$. The algebra $A(X)$ is a graded Artinian Gorenstein algebra of socle degree $s = n+1$. For oriented simplicial spheres, there is a good choice of volume $\vol : A(X)_s \to \tk$, called the Kustin-Miller volume, see \cite{AdiIntrinsic}. This is the unique volume which satisfies $\vol(\det(\theta_F)x_F) = 1$ for any oriented facet $F \in X_n$, where by $\det(\theta_F)$ we mean the determinant of the matrix $\theta_F = (\theta_{iv})_{0 \leq i \leq n, v \in F}$ where the vertices are taken in the positive order. Let us denote $\theta = (\theta_0,\ldots,\theta_n) \subset \tk [X_0]$. 

We can check that $\varepsilon = x_{X_0}^{p-1} \theta_0^{p-1} \ldots \theta_n^{p-1}$ by using Lemma \ref{lm:eps-in-ip-i} and Lemma \ref{lm:hat-vol-nu-eps}. Namely, it is easy to check that $\varepsilon \in (I^{[p]} + \theta^{[p]}: I + \theta)_{\hat s - s}$ since multiplying $x_{X_0}^{p-1}$ by a nonface monomial $x_K \in I$ gives $x_K^{p} x_{X_0 \setminus K}^{p-1} \in I^{[p]}$, and multiplying $\theta_0^{p-1} \ldots \theta_n^{p-1}$ by $\theta_i$ gives $\theta_0^{p-1} \ldots \theta_i^p \ldots \theta_n^{p-1} \in \theta^p$. By Lemma \ref{lm:hat-vol-nu-eps}, we need to prove that $\hat \vol(\varepsilon \nu) = 1$, where $\nu = \det(\theta_F) x_F$ for some positively oriented facet $F$, and $\hat \vol : (\tk [X_0]/(I^{[p]} + \theta^{[p]}))_{\hat s } \to \tk$ is the unique functional which satisfies $\hat \vol (x_{X_0}^{p-1} \nu^p) = 1$, in order to establish that our $\varepsilon$ is the correct one. We formulate this as the next lemma:

\begin{lemma}
    For the proposed
    $$\varepsilon=x_{X_0}^{p-1}\theta_0^{p-1}\cdots \theta_n^{p-1},$$
    we have $\hat \vol(\varepsilon \nu)=1$.
\end{lemma}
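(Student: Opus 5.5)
The plan is to prove the stronger congruence
$$\varepsilon\nu\equiv x_{X_0}^{p-1}\nu^p \pmod{I^{[p]}+\theta^{[p]}},$$
from which $\hat\vol(\varepsilon\nu)=\hat\vol(x_{X_0}^{p-1}\nu^p)=1$ follows by the definition of $\hat\vol$. Write $\nu=\det(\theta_F)x_F$ for a positively oriented facet $F$, and $\theta_i^F=\sum_{v\in F}\theta_{iv}x_v$. The argument reduces the global computation to a computation in the polynomial ring $\tk[x_v : v\in F]$, where it becomes the transformation law of the socle of $\tk[x_F]/(x_v^p)$ under a linear change of coordinates.

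\textbf{Step 1 (only $F$-supported monomials matter).} I first observe that for any monomial $x^a$ with $x_F\mid x^a$ whose support is not contained in $F$, we have $x_{X_0}^{p-1}x^a\in I^{[p]}$. Indeed, since $F$ is a facet, the support $K$ of $x^a$ is a nonface. Every vertex of $K$ appears in $x^a$ to exponent at least $1$, so $x_K^p$ divides $x_{X_0}^{p-1}x^a$.

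Now expand $\varepsilon\nu=\det(\theta_F)\,x_{X_0}^{p-1}x_F\prod_i\theta_i^{p-1}$. Every term involving some $x_u$ with $u\notin F$ dies, so modulo $I^{[p]}$
$$\varepsilon\nu\equiv \det(\theta_F)\,x_{X_0}^{p-1}\,x_F\prod_i(\theta_i^F)^{p-1}.$$

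\textbf{Step 2 (local identity).} In $\tk[x_F]$ set $y=\theta_F x$, that is, $y_i=\theta_i^F$. Because Frobenius is additive, $y_i^p=\sum_{v\in F}\theta_{iv}^p x_v^p$, and $\theta_F^{(p)}=(\theta_{iv}^p)$ is invertible. Hence the ideals $(y_i^p)$ and $(x_v^p)_{v\in F}$ coincide; call this ideal $J$.

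The quotient $\tk[x_F]/J$ is a complete intersection with one-dimensional socle in degree $(n+1)(p-1)$. I will show
$$\prod_i y_i^{p-1}\equiv \det(\theta_F)^{p-1}\prod_{v}x_v^{p-1}\pmod J.$$
This is the classical transformation rule for the socle generator (equivalently, the determinant of the transition matrix between the two regular sequences $(x_v^p)$ and $(y_i^p)$, read off from a Koszul comparison, as in Lemma \ref{lm:hat-vol-nu-eps}). A cheap route is to expand $\prod y_i^{p-1}$ and keep only monomials not divisible by any $x_v^p$. One checks the surviving coefficient equals $\det(\theta_F)^{p-1}$, for example by specializing $\theta_F$ to triangular or permutation matrices and using multiplicativity in $\theta_F$.

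Multiplying by $x_F$ and by $\det(\theta_F)$ then gives
$$\det(\theta_F)\,x_F\prod_i(\theta_i^F)^{p-1}\equiv \det(\theta_F)^p x_F^p=\nu^p \pmod J.$$

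\textbf{Step 3 (globalize).} It remains to check that $x_{X_0}^{p-1}J'\subset I^{[p]}+\theta^{[p]}$, where $J'$ denotes the part of $J$ actually appearing, namely elements $x_v^p g$ with $g$ $F$-supported of the right degree. Write $x_v^p=\sum_i c_{vi}\,y_i^p$, where $(c_{vi})$ is the inverse of $\theta_F^{(p)}$. Then replace $y_i^p$ by $\theta_i^p=y_i^p+\sum_{u\notin F}\theta_{iu}^p x_u^p$. The correction terms are $x_{X_0}^{p-1}x_u^p g$ with $u\notin F$, and these lie in $I^{[p]}$ whenever the support of $g$ together with $u$ is a nonface.

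\textbf{Main obstacle.} The difficulty is exactly this bookkeeping: $g$ need not be divisible by all of $x_F$, so the nonface argument of Step 1 must be reused with care. The fix I would try first is to keep the factor $x_F$ inside $g$ throughout Steps 2 and 3, i.e.\ to work with the ideal $x_F\cdot J$. This is legitimate because the congruence of Step 2 was multiplied by $x_F$ before being used, so every element that arises there is divisible by $x_F$. Then every correction monomial has support $F\cup\{u\}$, which is a nonface, and is divisible by $x_{F\cup\{u\}}$; by Step 1 it lies in $I^{[p]}$. This closes the argument.
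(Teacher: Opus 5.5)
Your proposal is correct and follows essentially the same route as the paper: reduce to the congruence $\varepsilon\nu\equiv x_{X_0}^{p-1}\nu^p$ modulo $I^{[p]}+\theta^{[p]}$, replace each $\theta_i$ by $\theta_i^F$ modulo $I^{[p]}$ because $F$ is a facet, apply the determinant identity modulo $((\theta_i^F)^p)$, and globalize by keeping the factor $x_{X_0}^{p-1}x_F$ so that the $(\theta_i^{F^c})^p$ corrections land in $I^{[p]}$. The only differences are that you sketch a proof of the determinant identity, which the paper simply calls standard, and that you route Step 3 through the generators $x_v^p$; this is harmless but unnecessary, since $J=((\theta_i^F)^p)$ can be used directly.
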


\begin{proof}
Fix some positively ordered facet $F=[v_0,\ldots,v_n]$ and let $\nu=\det(\theta_F)x_F \in \tk [X_0]_s$. It is enough to prove the congruence
$$\varepsilon\nu
\equiv
x_{X_0}^{p-1}\nu^p
\mod (I^{[p]}+\theta^{[p]}),$$
because then applying $\hat\vol$ gives
$$\hat\vol(\varepsilon\nu)
=
\hat\vol(x_{X_0}^{p-1}\nu^p)
=
1$$
by the definition of $\hat\vol$. We have
$$\varepsilon\nu
=
x_{X_0}^{p-1}\det(\theta_F)x_F
\theta_0^{p-1}\cdots\theta_n^{p-1}.$$
For every $i$, write
$$\theta_i=\theta_i^F+\theta_i^{F^c},
\qquad
\theta_i^F=\sum_{v\in F}\theta_{iv}x_v,
\qquad
\theta_i^{F^c}=\sum_{v\in X_0 \setminus F}\theta_{iv}x_v.$$
In the product
$$x_{X_0}^{p-1}x_F\theta_0^{p-1}\cdots\theta_n^{p-1},$$
any term which contains a variable $x_v$ with $v\notin F$ coming from one of the $\theta_i^{F^c}$ is divisible by $x_{F\cup\{v\}}^p$. Since $F$ is a facet, $F\cup\{v\}$ is not a face of $X$, hence $x_{F\cup\{v\}}^p\in I^{[p]}$. Therefore, modulo $I^{[p]}$, we may replace every $\theta_i$ by $\theta_i^F$. Thus
$$x_{X_0}^{p-1}x_F\theta_0^{p-1}\cdots\theta_n^{p-1}
\equiv
x_{X_0}^{p-1}x_F(\theta_0^F)^{p-1}\cdots(\theta_n^F)^{p-1}
\mod I^{[p]}.$$
Now we use the standard determinant identity
$$(\theta_0^F)^{p-1}\cdots(\theta_n^F)^{p-1}
\equiv
\det(\theta_F)^{p-1}x_F^{p-1}
\mod ((\theta_0^F)^p,\ldots,(\theta_n^F)^p).$$
Multiplying by $x_{X_0}^{p-1}x_F$, the congruence above gives
$$x_{X_0}^{p-1}x_F
(\theta_0^F)^{p-1}\cdots(\theta_n^F)^{p-1}
\equiv
x_{X_0}^{p-1}\det(\theta_F)^{p-1}x_F^p
\mod (I^{[p]}+\theta^{[p]}).$$
Here we used that
$$(\theta_i^F)^p=\theta_i^p-(\theta_i^{F^c})^p,$$
and the terms containing $(\theta_i^{F^c})^p$, after multiplication by $x_{X_0}^{p-1}x_F$, lie in $I^{[p]}$ by the same nonface argument as above. Therefore
$$\varepsilon\nu
=
x_{X_0}^{p-1}\det(\theta_F)x_F
\theta_0^{p-1}\cdots\theta_n^{p-1}
\equiv
x_{X_0}^{p-1}\det(\theta_F)^p x_F^p
\mod (I^{[p]}+\theta^{[p]}).$$
But $\nu^p=\det(\theta_F)^p x_F^p$. Hence
$$\varepsilon\nu
\equiv
x_{X_0}^{p-1}\nu^p
\mod (I^{[p]}+\theta^{[p]}).$$
Applying $\hat\vol$ gives
$$\hat\vol(\varepsilon\nu)
=
\hat\vol(x_{X_0}^{p-1}\nu^p)
=
1.$$
\end{proof}

We introduce a few notations. $\mathbb N = \{0,1,2,...\}$. If $a : X_0 \to \mathbb N$, then $x_a = \prod_{v \in X_0} x_v^{a(v)}$. We also use $|a| = \sum_{v \in X_0} a(v)$. Now we apply Theorem \ref{th:main}. For any $a: X_0 \to \mathbb N$ with $|a| = n+1$, we have
$$\vol (x_{a}) = \sum_{u \in M_{n+1}} ((x_{X_0}^{p-1} u^p) \circ (x_{X_0}^{p-1}\theta_0^{p-1}\cdots \theta_n^{p-1} x_a)) \vol(u)^p,$$ 
or, after simplifying,
$$\vol (x_{a}) = \sum_{u \in M_{n+1}} (u^p \circ (\theta_0^{p-1}\cdots \theta_n^{p-1} x_a)) \vol(u)^p.$$ 
We can expand further by the multinomial theorem:
$$
\theta_0^{p-1}\cdots\theta_n^{p-1}
=
(-1)^{n+1}
\sum_{|b_0|=\cdots=|b_n|=p-1}
\left(
\prod_{i=0}^{n}
\prod_{v\in X_0}
\frac{\theta_{iv}^{b_i(v)}}{b_i(v)!}
\right)
\prod_{v\in X_0}
x_v^{b_0(v)+\cdots+b_n(v)}.
$$
Substituting this into the previous identity gives
$$
\vol(x_a)
=
(-1)^{n+1}
\sum_{|b_0|=\ldots=|b_n|=p-1}
\left(
\prod_{i=0}^{n}
\prod_{v\in X_0}
\frac{\theta_{iv}^{b_i(v)}}{b_i(v)!}
\right)
\vol\left(x_{(a+b)/p}\right)^p,
$$
where the sum goes over all $b_0,\ldots,b_n$, each of type $b_i : X_0 \to \mathbb N$, with $|b_i| = p-1$, and $b(v) = b_0(v) + \ldots + b_n(v)$. The expression $x_{(a + b)/p}$ is declared to be equal to zero if $(a(v) + b(v))$ is not divisible by $p$ for some vertex $v \in X_0$. The sign $(-1)^{n+1}$ comes from the multinomial theorem and the fact that $(p-1)! = -1$ in characteristic $p$. This coincides with the formula given in \cite[Appendix 1, Theorem A.1]{AdiPoly}. Analogous formulas could be written for IDP lattice spheres and IDP lattice polytopes using the same approach.

\subsection{Minimal non-complete intersection example} \label{subsec:minimalnonci}
This is an example of a graded Artinian Gorenstein $\k$-algebra which is not a complete intersection. It is taken from \cite[Example 3.10]{Burke}, along with some homological data. We call this example minimal in the following sense: by the Buchsbaum--Eisenbud structure theorem \cite{BuchsbaumEisenbud}, every height $3$ Gorenstein ideal has an odd number of minimal generators. Since the case of $3$ generators is precisely the complete intersection case, every non-complete-intersection Gorenstein ideal of height $3$ has at least $5$ minimal generators.

Let $S = \k[x,y,z]$ and let
$$
g_0=x^2,\qquad g_1=-yz,\qquad g_2=xy+z^2,\qquad g_3=-xz,\qquad g_4=y^2.
$$
Let $I=(g_0,g_1,g_2,g_3,g_4)$ and $R=S/I$. The set $g_0,\ldots,g_4$ is a minimal homogeneous generating set of $I$. The algebra $R$ is a Gorenstein Artinian $\k$-algebra of socle degree $s=2$ and projective dimension $m = 3$. Let
$$
F^{\bullet} :=
(0 \to F^{-3} \to F^{-2} \to F^{-1} \to F^0 \to 0)
=
(0 \to S(-5) \to S(-3)^5 \to S(-2)^5 \to S \to 0)
$$
be a minimal free resolution of $R$ as an $S$-module, and let $\vol : R_2 \to \k$ be defined by $\vol(xy) = 1$. We denote the basis of $F^{-1}$ by $a_0,\ldots,a_4$ and the basis of $F^{-2}$ by $b_0,\ldots,b_4$; the basis of $F^{-3}$ is $c_0$. By $K^{\bullet}$ we mean the Koszul complex of the regular sequence $x,y,z$ in $\k[x,y,z]$. We name the basis of $K^{-1}$ as $e_0,e_1,e_2$; the basis of $K^{-2}$ is $e_1\wedge e_2, -e_0\wedge e_2,e_0\wedge e_1;$ and the basis of $K^{-3}$ is $e_0 \wedge e_1 \wedge e_2$.

% https://q.uiver.app/#q=WzAsMTIsWzUsMCwiMCJdLFs0LDAsIlMiXSxbMywwLCJTKC0yKV41Il0sWzIsMCwiUygtMyleNSJdLFsxLDAsIlMoLTUpIl0sWzAsMCwiMCJdLFs0LDEsIlMiXSxbNSwxLCIwIl0sWzMsMSwiUygtMSleMyJdLFsyLDEsIlMoLTIpXjMiXSxbMSwxLCJTKC0zKSJdLFswLDEsIjAiXSxbMSwwXSxbMiwxLCJkX3tGXzF9Il0sWzMsMiwiZF97Rl8yfSJdLFs0LDMsImRfe0ZfM30iXSxbNSw0XSxbMSw2LCI9Il0sWzYsN10sWzgsNiwiZF97S18xfSJdLFs5LDgsImRfe0tfMn0iXSxbMTAsOSwiZF97S18zfSJdLFsxMSwxMF0sWzIsOCwiTl8xIl0sWzMsOSwiTl8yIl0sWzQsMTAsIk5fMyJdXQ==
$$
\begin{tikzcd}
	0 & {S(-5)} & {S(-3)^5} & {S(-2)^5} & S & 0 \\
	0 & {S(-3)} & {S(-2)^3} & {S(-1)^3} & S & 0
	\arrow[from=1-1, to=1-2]
	\arrow["{d_F^{-3}}", from=1-2, to=1-3]
	\arrow["{N^{-3}}", from=1-2, to=2-2]
	\arrow["{d_F^{-2}}", from=1-3, to=1-4]
	\arrow["{N^{-2}}", from=1-3, to=2-3]
	\arrow["{d_F^{-1}}", from=1-4, to=1-5]
	\arrow["{N^{-1}}", from=1-4, to=2-4]
	\arrow[from=1-5, to=1-6]
	\arrow["{=}", from=1-5, to=2-5]
	\arrow[from=2-1, to=2-2]
	\arrow["{d_K^{-3}}", from=2-2, to=2-3]
	\arrow["{d_K^{-2}}", from=2-3, to=2-4]
	\arrow["{d_K^{-1}}", from=2-4, to=2-5]
	\arrow[from=2-5, to=2-6]
\end{tikzcd}
$$
With respect to the bases fixed above, the maps $N^\bullet:F^\bullet\to K^\bullet$ are given by
\[
N^{-1}=
\begin{pmatrix}
x & 0 & 0 & 0 & 0\\
0 & 0 & x & 0 & y\\
0 & -y & z & -x & 0
\end{pmatrix},
\qquad
N^{-2}=
\begin{pmatrix}
y & 0 & 0 & 0 & 0\\
0 & 0 & 0 & 0 & x\\
0 & -x & 0 & 0 & 0
\end{pmatrix},
\qquad
N^{-3}=(xy).
\]
The differentials of the Koszul complex are
\[
d_K^{-1}=(x\ \ y\ \ z),
\qquad
d_K^{-2}=
\begin{pmatrix}
0 & z & -y\\
-z & 0 & x\\
y & -x & 0
\end{pmatrix},
\qquad
d_K^{-3}=
\begin{pmatrix}
x\\
y\\
z
\end{pmatrix}.
\]
In particular, with this basis of $K^{-2}$, we have $d_K^{-3}=(d_K^{-1})^T$. The differentials of $F^\bullet$ are
$$
d_F^{-1}=
\begin{pmatrix}
x^2 & -yz & xy+z^2 & -xz & y^2
\end{pmatrix},
$$
$$
d_F^{-2}=
\begin{pmatrix}
0 & y & 0 & 0 & z\\
-y & 0 & x & z & 0\\
0 & -x & 0 & y & 0\\
0 & -z & -y & 0 & x\\
-z & 0 & 0 & -x & 0
\end{pmatrix},
\qquad
d_F^{-3} = (d_F^{-1})^T.
$$
This shows that
$$
N^{-3}(c_0)=(xy)e_0\wedge e_1\wedge e_2,
$$
so $\nu=xy$. Now assume that $\k$ is a field of characteristic $p$. Let
$$
I^{[p]} = (x^{2p},(-yz)^p,(xy+z^2)^p,(-xz)^p,y^{2p}).
$$
We introduce $\hat K^{\bullet} = K^{\bullet} \otimes_{Frob}S$, which is the Koszul complex of the sequence $x^p,y^p,z^p$. Thus $\hat K^{-1}$ has basis $\hat e_0,\hat e_1,\hat e_2$, $\hat K^{-2}$ has basis $\hat e_1\wedge \hat e_2,-\hat e_0\wedge \hat e_2, \hat e_0\wedge \hat e_1$ and $\hat K^{-3}$ has basis $\hat e_0\wedge\hat e_1\wedge\hat e_2$. Analogously, we define $\hat F^{\bullet} = F^{\bullet} \otimes_{Frob} S$ with basis
$$
\hat c_0;\qquad
\hat b_0,\ldots,\hat b_4;\qquad
\hat a_0,\ldots,\hat a_4;\qquad
1.
$$
We define $D : \hat K^{\bullet} \to K^{\bullet}$ to be the commutative dg-algebra morphism with
$$
D(\hat e_0)=x^{p-1}e_0,\qquad
D(\hat e_1)=y^{p-1}e_1,\qquad
D(\hat e_2)=z^{p-1}e_2.
$$
In order to get the matrices for $d_{\hat F}^i,\hat N^i,d_{\hat K}^i$, we just raise every coefficient to the power $p$.

\[
\begin{tikzcd}
	0 & {S(-5p)} & {S(-3p)^5} & {S(-2p)^5} & S & 0 \\
	0 & {S(-3p)} & {S(-2p)^3} & {S(-p)^3} & S & 0
	\arrow[from=1-1, to=1-2]
	\arrow["{d_{\hat F}^{-3}}", from=1-2, to=1-3]
	\arrow["{\hat N^{-3}}", from=1-2, to=2-2]
	\arrow["{d_{\hat F}^{-2}}", from=1-3, to=1-4]
	\arrow["{\hat N^{-2}}", from=1-3, to=2-3]
	\arrow["{d_{\hat F}^{-1}}", from=1-4, to=1-5]
	\arrow["{\hat N^{-1}}", from=1-4, to=2-4]
	\arrow[from=1-5, to=1-6]
	\arrow["{=}", from=1-5, to=2-5]
	\arrow[from=2-1, to=2-2]
	\arrow["{d_{\hat K}^{-3}}", from=2-2, to=2-3]
	\arrow["{d_{\hat K}^{-2}}", from=2-3, to=2-4]
	\arrow["{d_{\hat K}^{-1}}", from=2-4, to=2-5]
	\arrow[from=2-5, to=2-6]
\end{tikzcd}
\]
The differentials of $\hat K^\bullet$ are
\[
d_{\hat K}^{-1}=(x^p\ \ y^p\ \ z^p),
\qquad
d_{\hat K}^{-2}=
\begin{pmatrix}
0 & z^p & -y^p\\
-z^p & 0 & x^p\\
y^p & -x^p & 0
\end{pmatrix},
\qquad
d_{\hat K}^{-3}=
\begin{pmatrix}
x^p\\
y^p\\
z^p
\end{pmatrix}.
\]
The differentials of $\hat F^\bullet$ are
\[
d_{\hat F}^{-1}=
\begin{pmatrix}
x^{2p} & -y^p z^p & x^p y^p + z^{2p} & -x^p z^p & y^{2p}
\end{pmatrix},
\]
\[
d_{\hat F}^{-2}=
\begin{pmatrix}
0 & y^p & 0 & 0 & z^p\\
-y^p & 0 & x^p & z^p & 0\\
0 & -x^p & 0 & y^p & 0\\
0 & -z^p & -y^p & 0 & x^p\\
-z^p & 0 & 0 & -x^p & 0
\end{pmatrix},
\qquad
d_{\hat F}^{-3}=(d_{\hat F}^{-1})^T.
\]
The maps $\hat N^\bullet:\hat F^\bullet\to \hat K^\bullet$ are
\[
\hat N^{-1}=
\begin{pmatrix}
x^p & 0 & 0 & 0 & 0\\
0 & 0 & x^p & 0 & y^p\\
0 & -y^p & z^p & -x^p & 0
\end{pmatrix},
\qquad
\hat N^{-2}=
\begin{pmatrix}
y^p & 0 & 0 & 0 & 0\\
0 & 0 & 0 & 0 & x^p\\
0 & -x^p & 0 & 0 & 0
\end{pmatrix},
\qquad
\hat N^{-3}=(x^p y^p).
\]
We can define $\hat \vol : \hat R_{\hat s} \to \k$ as the unique functional such that
$$
\hat \vol(x^{p-1} y^{p-1} z^{p-1}\nu^p)
=
\hat \vol(x^{2p-1} y^{2p-1} z^{p-1})
=
1,
$$
where $\hat s = 2p + 3(p-1) = 5p - 3$. The morphism $P^{\bullet} : \hat F^{\bullet} \to F^{\bullet}$ can be defined as
\[
P^{-1}=
\begin{pmatrix}
x^{2p-2} & 0 & 0 & 0 & 0\\
0 & (-yz)^{p-1} & 0 & 0 & 0\\
0 & 0 & (xy+z^2)^{p-1} & 0 & 0\\
0 & 0 & 0 & (-xz)^{p-1} & 0\\
0 & 0 & 0 & 0 & y^{2p-2}
\end{pmatrix}.
\]

\[
P^{-2}=
\begin{pmatrix}
y^{2p-2}z^{p-1} & 0 & 0 & y^{p-2}\bigl(z(xy+z^2)^{p-1}-z^{2p-1}\bigr) & 0\\
0 & x^{p-1}(xy+z^2)^{p-1} & 0 & 0 & 0\\
0 & 0 & x^{p-1}y^{p-1}z^{p-1} & 0 & 0\\
0 & 0 & 0 & y^{p-1}(xy+z^2)^{p-1} & 0\\
0 & x^{p-2}\bigl(z(xy+z^2)^{p-1}-z^{2p-1}\bigr) & 0 & 0 & x^{2p-2}z^{p-1}
\end{pmatrix}.
\]

\[
P^{-3}=(\varepsilon),
\qquad
\varepsilon=x^{p-1}y^{p-1}z^{p-1}(xy+z^2)^{p-1}.
\]
A direct computation verifies that these matrices define a chain map
$$
P^\bullet:\hat F^\bullet\to F^\bullet
$$
lifting the quotient map $\hat R\to R$. Therefore, the Parseval--Rayleigh identity is
$$
\vol(w)
=
\sum_{u \in M_2}
\left(
(x^{p-1} y^{p-1} z^{p-1} u^p)
\circ
(x^{p-1}y^{p-1}z^{p-1}(xy+z^2)^{p-1}w)
\right)
\vol(u)^p,
$$
or, after simplifying,
$$
\vol(w)
=
\sum_{u \in M_2}
\left(
u^p \circ ((xy+z^2)^{p-1}w)
\right)
\vol(u)^p.
$$

\bibliographystyle{alpha}
\bibliography{Biblio}{}

\end{document}